\newcommand{\kqcyc}{kQ_\text{\textnormal{cyc}}}
\title{Type \texttt{0} walls for G-Hilb}
\title{Crepant resolutions, mutations, and the space of potentials}
\author[M.~Barker]{Mary Barker}
\address{Fred Hutchinson Cancer Center \\ Seattle \\ WA \\ 98109 \\ USA}
\email{marybarker103@gmail.com}
\author[B.~Standaert]{Benjamin Standaert}
\address{Department of Computer Science \\ Washington University in St.~Louis \\ St.~Louis \\ MO \\ 63130 \\ USA}
\email{b.g.standaert@wustl.edu}
\begin{document}

\maketitle

\begin{abstract}
    The McKay correspondence has had much success in studying resolutions of $3$-fold quotient singularities through a wide range of tools coming from geometry, combinatorics, and representation theory. We develop a computational perspective in this setting primarily realised through a web application to explore mutations of quivers with potential and crepant triangulations. We use this to study flops between different crepant resolutions of Gorenstein toric quotient singularities and find many situations in which the mutations of a quiver with potential classifies them. The application also implements key constructions of the McKay correspondence, including the Craw--Reid procedure and the process of associating a quiver to a toric resolution.
\end{abstract}

\section{Introduction}

\subsection{McKay correspondence} The McKay correspondence seeks to relate the representation theory of a finite subgroup $G\subseteq\on{SL}_n(\C)$ and the geometry of minimal resolutions of the corresponding quotient singularity $\C^n/G=\on{Spec}{\C[x_1,\dots,x_n]^G}$. Resolutions of $\C^n/G$ are very well-understood in dimension two and have been the subject of much study in dimension three. Crepant, a fortiori minimal, resolutions are known to exist in this setting by the landmark work of Bridgeland--King--Reid \cite{bkr_mck_01} and have been seen to all be realisable as moduli spaces $\M_\theta$ of $\theta$-stable quiver representations (see \cite{ci_flo_04} for the abelian case and the recent preprints \cite{yam_mod_22a,yam_mod_22b} for the general case). The quiver in question is the \emph{McKay quiver} $Q_G$ arising from the representation theory of $G$.

\vspace{1em}

The stability parameter $\theta$ inhabits a high dimensional vector space that decomposes into chambers where the isomorphism type of $\M_\theta$ is constant. There is much interest in understanding this chamber decomposition and its consequences for the birational geometry of the singularity $\C^n/G$ and its resolutions \cite{ci_flo_04,wor_wal_20,wor_wal_22,ns_flo_17,yam_mod_22b}.

\vspace{1em}

When $G\subseteq\on{SL}_3(\C)$ is a polyhedral group -- conjugate to a subgroup of $\on{SO}_3(\R)$ -- Nolla de Celis--Sekiya \cite{ns_flo_17} calculate the crepant resolutions of $\C^3/G$ and naturally relate them to mutations of the McKay quiver equipped with a ``potential'' coming from representation theory \cite{bsw_sup_10}. Quivers with potentials and their mutations were developed in a series of papers by Derksen--Weyman--Zelevinsky \cite{dwz_qui_08,dwz_qui_10}. In this setting a potential is a formal linear combination of cycles in $Q$, and there is again an involutive mutation operation producing from a choice of a quiver with potential and a node a new quiver with potential. One striking contrast to the usual Fomin--Zelevinsky \cite{fz_clu_02} theory of quiver mutations is that many more quivers with potential have finite mutation type than just the finite-type quivers. In this situation mutations for quivers correspond to flops in exceptional curves in geometry.

\vspace{1em}

Our starting point is to generalise the work of Nolla de Celis--Sekiya to other classes of subgroups of $\on{SL}_3(\C)$; in this paper to consider all finite abelian subgroups. In this case the singularity $\C^3/G$ and its crepant resolutions are toric and so the latter can be represented as triangulations of a triangle living in a certain lattice (see \S\ref{sec:abelian}).

\vspace{1em}

Outside of the polyhedral setting the McKay quiver is not the right gadget to use; for instance, it is almost exclusively in the polyhedral case that the vertices of $Q_G$ biject with exceptional curves in a crepant resolution of $\C^3/G$. The object we use instead is the \emph{curve quiver}. This nomenclature is not standard nor is our definition (see \S\ref{sec:cq}) but this quiver makes regular appearances in the homological minimal model program of Wemyss and collaborators \cite{wem_flo_18,wem_asp_14,dw_non_16}. We pose the following conjecture first considered in \cite[Rem.~4.7]{wor_wal_22}.

\begin{conjecture*} \label{conj:main}
    Let $G$ be a finite (abelian) subgroup of $\on{SL}_3(\C)$, and let $Q$ be the curve quiver for a crepant resolution of $\C^3/G$. Then there is a unique minimal potential $W$ on $Q$ such that the quivers with potential mutation-equivalent to $(Q,W)$ naturally biject with crepant resolutions of $\C^3/G$ in such a way that mutation in a node corresponds to flopping the associated curve.
\end{conjecture*}

We measure minimality of potentials by considering the sets of monomials that they are supported on under inclusion. Our computational methods allow us to probe this conjecture, which quickly becomes unfeasible for hand computation.

\begin{thm*}
Conjecture \ref{conj:main} is true for 74 cyclic subgroups of $\on{SL}_3(\C)$ listed in \S\ref{sec:list}.
\end{thm*}

\begin{remark*}
    It is worth comparing our conjectures and results to existing connections between mutations in the context of cluster algebras and triangulations on (oriented) surfaces \cite{lab_tri_16,wil_qui_21,gm_bra_17}.
\end{remark*}

We next outline our main computational tool -- the web application \texttt{qwp\_mutation} -- and our new results in the theory of quivers with potential.

\subsection{Computational methods for quivers with potential} 

Fix a quiver $Q$. For each potential $W$ on $Q$ we obtain a set of quivers with potential obtained by sequences of mutations from the pair $(Q,W)$. Invariants of this set allow us to stratify the space of potentials on $Q$. We investigate this construction for three invariants:

\begin{enumerate}
    \item the \emph{exchange number}: the number of distinct quivers obtained by iterated mutation from $(Q,W)$,
    \item the \emph{quiver set}: the set of quivers obtained by iterated mutation from $(Q,W)$,
    \item the \emph{exchange graph}: the quiver set viewed as a labelled graph by adding edges between nodes for each (one-step) mutation between quivers with potential.
\end{enumerate}

The exchange graph is a strong invariant in the following sense.

\begin{thm*}[Prop.~\ref{prop:minimal}]
    Let $Q$ be a quiver. Fix a graph $\Gamma$ labelled by quivers as in (iii). There is at most one minimal potential $W$ on $Q$ such that the exchange graph of $(Q,W)$ is identified with $\Gamma$.
\end{thm*}

This validates our computational approach: if we can locate a minimal potential that captures the resolutions of $\C^3/G$ in the sense of Conjecture \ref{conj:main} then we have found the unique such potential. We found heuristically that the quiver set is often sufficient to pin down this unique minimal potential, which is much more computationally efficient to use. We conjecture that this is always sufficient to use for this purpose.

\begin{question*}[Question \ref{conj:qs_eg}]
Given the curve quiver of a crepant triangulation $\mathcal{T}$, does every potential with quiver set equal to the set of curve quivers associated to triangulations obtained from $\mathcal{T}$ by flips further have the same exchange graph in the sense of Conjecture \ref{conj:main}?
\end{question*}

\vspace{1em}

The \texttt{qwp\_mutation} web application\footnote{Freely available for use at \url{https://marybarker.github.io/quiver_mutations}.} has several functions:

\begin{itemize}
    \item \textbf{Mutation:} Given the input of a quiver with potential, users can mutate in nodes of the quiver (or identify if this is not possible).
    \item \textbf{Quiver set:} Given the input of a quiver with potential $(Q,W)$, the application computes its quiver set.
    \item \textbf{Craw--Reid procedure:} Given the input of a triple $(a,b,c)\in\N^3$, the application produces the triangulation for $G\hilb\C^3$ (see Rem.~\ref{rem:moc} for the current mild implementation issues).
    \item \textbf{Curve quiver:} Given the input of a triple $(a,b,c)\in\N^3$, the application produces the curve quiver associated to the triangulation for $G\hilb\C^3$.
    \item \textbf{Verification:} Given the input of a triple $(a,b,c)\in\N^3$ and a potential $W$ on the associated curve quiver, the application can verify whether the exchange graph of $(Q,W)$ can be identified with the graph of crepant resolutions of $\C^3/G$, where $G=\frac{1}{a+b+c}(a,b,c)$, linked by flops as in Conjecture \ref{conj:main}.
\end{itemize}

See \S\ref{sec:app} for further discussion on efficiency, success rates, and examples of the utility of this web application. We hope that it will serve many researchers in finding computational evidence or inspiration for questions around the McKay correspondence and involving mutations of quivers with potential.

\subsection*{Acknowledgements} The authors are grateful to Tom Ducat for many helpful conversations in the early stages of this project. They are also grateful to Michael Wemyss and Franco Rota for their input. They drew much inspiration from Bernhard Keller's quiver mutation applet\footnote{Freely available at \url{https://webusers.imj-prg.fr/~bernhard.keller/quivermutation/}.} for which they thank him.

\section{Background}

\subsection{Quivers and mutations} \label{sec:qui_mut}

A quiver is a pair $Q=(Q_0,Q_1)$ of sets with two functions $t,h\colon Q_1\to Q_0$ that we refer to as the \emph{tail} and \emph{head} of an element of $Q_1$. We refer to $Q_0$ as the vertices and $Q_1$ as the arrows of $Q$. It is typical to identify $Q$ with the directed graph supported on node set $Q_0$ where each arrow $\alpha\in Q_1$ goes from $t(\alpha)$ to $h(\alpha)$. We refer to an arrow $\alpha$ with $t(\alpha)=h(\alpha)$ as a loop.

Given a quiver $Q$ with no $2$-cycles and a loop-less node $k\in Q_0$ Fomin--Zelevinsky \cite{fz_clu_02} define the \emph{mutation} $\mu_kQ$ of $Q$ at $k$ in the following way:

\begin{itemize}
\item for each pair of arrows $\alpha,\beta$ with $h(\alpha)=t(\beta)=k$ introduce a new arrow $\alpha\beta$ with $t(\alpha\beta)=t(\alpha)$ and $h(\alpha\beta)=h(\beta)$.
\item for each arrow $\gamma$ with either $t(\gamma)=k$ or $h(\gamma)=k$ remove $\gamma$ and introduce a new arrow $\gamma^*$ with $t(\gamma^*)=h(\gamma)$ and $h(\gamma^*)=t(\gamma)$.
\item for each new arrow $\alpha\beta$ or $\gamma^*$ that causes a $2$-cycle to be formed in $Q$ remove $\alpha\beta$ or $\gamma^*$ and a second arrow constituting a $2$-cycle with $\alpha\beta$ or $\gamma^*$.
\end{itemize}

\begin{example} Consider the quiver $Q$ depicted in Fig.~\ref{fig:mut_ex}(a). The mutation of $Q$ in the node labelled $\star$ is shown in Fig.~\ref{fig:mut_ex}(b).
\begin{figure}[h]
    \centering
\begin{tikzpicture}[scale=0.8]
\node (a) at (0,2){$\bullet$};
\node (o) at (0,0){$\star$};
\node (b) at (2,0){$\bullet$};
\node (c) at (0,-2){$\bullet$};
\node (d) at (-2,0){$\bullet$};

\draw[->] (a) to (o);
\draw[->] (o) to (b);
\draw[->] (o) to (d);
\draw[->] (c) to (o);
\draw[->] (d) to (a);
\draw[->] (b) to (c);

\node (a) at (6,2){$\bullet$};
\node (o) at (6,0){$\star$};
\node (b) at (8,0){$\bullet$};
\node (c) at (6,-2){$\bullet$};
\node (d) at (4,0){$\bullet$};

\draw[<-] (a) to (o);
\draw[<-] (o) to (b);
\draw[<-] (o) to (d);
\draw[<-] (c) to (o);
\draw[->] (c) to (d);
\draw[->] (a) to (b);

\node at (0,-3){(a)};
\node at (6,-3){(b)};
\end{tikzpicture}
    \caption{Example of quiver mutation}
    \label{fig:mut_ex}
\end{figure}
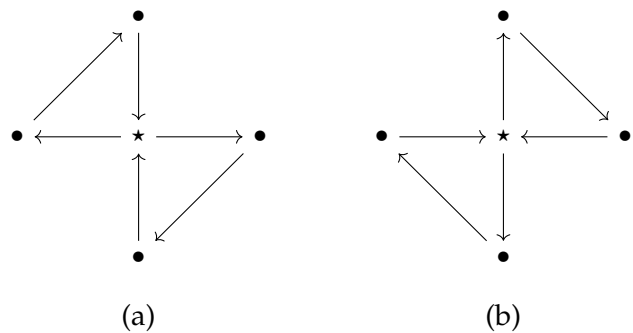
\end{example}

Mutation of quivers in this sense is intimately related to the theory of cluster algebras and associated representation theory and combinatorics \cite{fz_clu_02,kel_clu_08}.

\subsection{Quivers with potentials and mutations} \label{sec:qwp_mut}

We follow various authors \cite{ns_flo_17,dwz_qui_08} in extending beyond mutations of quivers without $2$-cycles to mutations of more general quivers augmented by a potential. To define quivers with potential we first construct the \emph{path algebra} of a quiver.

\begin{definition}
Let $Q=(Q_0,Q_1)$ be a quiver. Consider the free algebra $k\langle Q_0\cup Q_1\rangle$. We denote the algebra element corresponding to a node $i\in Q_0$ by $e_i$. The \emph{path algebra} $kQ$ of $Q$ with coefficients in a field $k$ is the noncommutative $k$-algebra defined as the quotient of $k\langle Q_0\cup Q_1\rangle$ by the two-sided ideal generated by
$$e_i\alpha\qquad\alpha e_j\qquad\alpha\beta \qquad e_p\alpha-\alpha\qquad \alpha e_q-\alpha$$
whenever $t(\alpha)=p\not=i,h(\alpha)=q\not=j$ and $h(\alpha)\not=t(\beta)$. In other words, the multiplication in $kQ$ captures concatenation of arrows in $Q$ with the node generators $e_i$ acting as idempotents picking out arrows beginning or ending at $i$. The field $k$ will play a largely auxiliary role in the following.
\end{definition}

Consider the subalgebra $\kqcyc\subseteq kQ$ generated by all cycles in $Q$:
$$\alpha_1\alpha_2\dots\alpha_r$$
such that $h(\alpha_i)=t(\alpha_{i+1})$ and $t(\alpha_1)=h(\alpha_r)$. We call elements of $\kqcyc$ \emph{potentials} on $Q$ with coefficients in $k$ and a pair $(Q,W)$ consisting of a quiver $Q$ and a potential $W$ on $Q$ a \emph{quiver with potential} (QwP). We say that a QwP is \emph{reduced} if it has no quadratic terms (i.e.~monomials of degree $2$).

The \emph{derivative} of a monomial $\gamma=\beta_1\dots\beta_r\in kQ$ in direction $\alpha\in Q_1$ is
$$\partial_\alpha\gamma=\begin{cases}
0 & \alpha\not=\beta_i \\
\beta_1\dots\beta_{i-1}\beta_{i+1}\dots\beta_r & \alpha=\beta_i
\end{cases}$$
Given a QwP we can define its \emph{Jacobian algebra}
$$P(Q,W):=kQ/\partial W$$
where $\partial W$ is the two-sided ideal generated by derivatives $\partial_\alpha W$ as $\alpha\in Q_1$. This is an important algebraic invariant of $(Q,W)$ containing many well-known classes of algebra like preprojective algebras \cite{rin_pre_98}. We call two QwPs \emph{Jacobian equivalent} if their Jacobian algebras are isomorphic.

We follow Derksen--Weyman--Zelevinsky and Nolla de Celis--Sekiya \cite{ns_flo_17,dwz_qui_08} in defining mutation for (many) QwPs as follows. Let $(Q,W)$ be a QwP where $Q$ is permitted $2$-cycles and let $k\in Q_0$ be a loopless node.
\begin{enumerate}
    \item Define $\wt{Q}$ as the quiver with node set $Q_0$ and edge set
    $$Q_1\setminus\{\alpha:h(\alpha)=k\}\cup\{\beta:t(\beta)=k\}$$
    along with new arrows
$$[\alpha\beta]\qquad\alpha^*\qquad\beta^*$$
whenever $h(\alpha)=k=t(\beta)$ where
$$h([\alpha\beta])=h(\beta)\quad t([\alpha\beta])=t(\alpha)\qquad h(\alpha^*)=t(\alpha)\quad t(\alpha^*)=h(\alpha)\qquad h(\beta^*)=t(\beta)\quad t(\beta^*)=h(\beta)$$
    \item Construct a potential $\wt{W}$ on $\wt{Q}$ from $W$ by converting any occurrence of $\alpha\beta$ to $[\alpha\beta]$ and appending all terms of the form
$$\beta^*\alpha^*[\alpha\beta]$$
    \item Let $(Q',W')$ be a reduced QwP that is Jacobian equivalent to $(Q,W)$. Define the mutation of $(Q,W)$ at node $k$ to be
    $$\mu_k(Q,W)=(Q',W').$$
\end{enumerate}

\cite[Cor.~5.4]{dwz_qui_08} shows that this operation is (suitably) well-defined, \cite[Thm.~5.7]{dwz_qui_08} shows that it is involutive, and \cite[Lem.~4.2]{ns_flo_17} describes a process to produce a Jacobian equivalent reduced QwP from a general QwP in many cases.

\begin{example}
    Consider the ``double'' $Q$ of the quiver from Fig.~\ref{fig:mut_ex}(a). This is not approachable by the standard mutation machinery of \S\ref{sec:qui_mut} since it has (many) $2$-cycles. We equip it with the potential
    $$W=x_{01}x_{14}x_{40}+x_{04}x_{41}x_{10}+x_{02}x_{23}x_{30}+x_{03}x_{32}x_{20}+x_{10}x_{03}x_{30}x_{01}+x_{20}x_{04}x_{40}x_{02}$$
    where $x_{ij}$ denotes the arrow in $Q$ from $i$ to $j$. We show the quiver $(\wt{Q},\wt{W})$ produced by the first steps of mutation in Fig.~\ref{fig:mut_qwp_ex}(b) with original arrows grayed out, and the final mutated quiver $\mu_0(Q,W)$ in Fig.~\ref{fig:mut_qwp_ex}(c).

    \begin{figure}[h]
    \centering
\begin{tikzpicture}[scale=0.8]
\node (a) at (0,2){$1$};
\node (o) at (0,0){$0$};
\node (b) at (2,0){$2$};
\node (c) at (0,-2){$3$};
\node (d) at (-2,0){$4$};

\draw[->] (a) [out=255,in=105] to (o);
\draw[<-] (a) [out=285,in=75] to (o);
\draw[->] (o) [out=15,in=165] to (b);
\draw[<-] (o) [out=345,in=195] to (b);
\draw[->] (o) [in=15,out=165] to (d);
\draw[<-] (o) [in=345,out=195] to (d);
\draw[->] (c) [in=255,out=105] to (o);
\draw[<-] (c) [in=285,out=75] to (o);
\draw[->] (d) [out=35,in=235] to (a);
\draw[<-] (d) [out=55,in=215] to (a);
\draw[->] (b) [in=35,out=235] to (c);
\draw[<-] (b) [in=55,out=215] to (c);

\node (a) at (6,2){$\bullet$};
\node (o) at (6,0){$\star$};
\node (b) at (8,0){$\bullet$};
\node (c) at (6,-2){$\bullet$};
\node (d) at (4,0){$\bullet$};

\draw[->,color=gray] (a) [out=255,in=105] to (o);
\draw[<-,color=gray] (a) [out=285,in=75] to (o);
\draw[->,color=gray] (o) [out=15,in=165] to (b);
\draw[<-,color=gray] (o) [out=345,in=195] to (b);
\draw[->,color=gray] (o) [in=15,out=165] to (d);
\draw[<-,color=gray] (o) [in=345,out=195] to (d);
\draw[->,color=gray] (c) [in=255,out=105] to (o);
\draw[<-,color=gray] (c) [in=285,out=75] to (o);
\draw[->,color=gray] (d) [out=35,in=235] to (a);
\draw[<-,color=gray] (d) [out=55,in=215] to (a);
\draw[->,color=gray] (b) [in=35,out=235] to (c);
\draw[<-,color=gray] (b) [in=55,out=215] to (c);

\draw[->] (a) [out=305,in=145] to (b);
\draw[<-] (a) [out=325,in=125] to (b);
\draw[->] (d) [out=305,in=145] to (c);
\draw[<-] (d) [out=325,in=125] to (c);
\draw[<-] (a) [out=250,in=110] to (c);
\draw[->] (a) [out=290,in=70] to (c);
\draw[->] (a) [out=170,in=100] to (d);
\draw[<-] (a) [out=190,in=80] to (d);
\draw[->] (b) [out=260,in=10] to (c);
\draw[<-] (b) [out=280,in=350] to (c);
\draw[<-] (b) [out=160,in=20] to (d);
\draw[->] (b) [out=200,in=340] to (d);

\node (a) at (12,2){$\bullet$};
\node (o) at (12,0){$\star$};
\node (b) at (14,0){$\bullet$};
\node (c) at (12,-2){$\bullet$};
\node (d) at (10,0){$\bullet$};

\draw[<-] (a) [out=255,in=105] to (o);
\draw[->] (a) [out=285,in=75] to (o);
\draw[<-] (o) [out=15,in=165] to (b);
\draw[->] (o) [out=345,in=195] to (b);
\draw[<-] (o) [in=15,out=165] to (d);
\draw[->] (o) [in=345,out=195] to (d);
\draw[<-] (c) [in=255,out=105] to (o);
\draw[->] (c) [in=285,out=75] to (o);
\draw[<-] (d) [out=325,in=125] to (c);
\draw[->] (d) [out=305,in=145] to (c);
\draw[->] (b) [in=305,out=145] to (a);
\draw[<-] (b) [in=325,out=125] to (a);

\node at (0,-3){(a)};
\node at (6,-3){(b)};
\node at (12,-3){(c)};
\end{tikzpicture}
    \caption{Example of QwP mutation}
    \label{fig:mut_qwp_ex}
\end{figure}
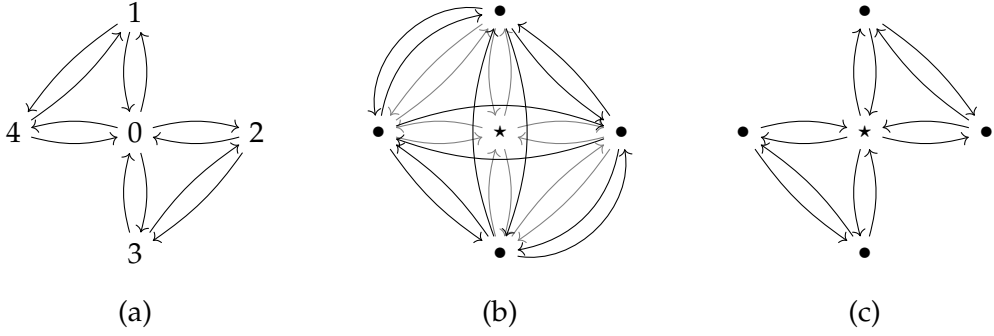

The intermediate potential is given by
\begin{align*}
\wt{W}&=y_{41}x_{41}^*+y_{14}x_{14}^*+y_{32}x_{32}^*+y_{23}x_{23}^*+y_{13}y_{31}+y_{24}y_{42} \\
&+x_{01}^*x_{20}^*y_{21}+x_{02}^*x_{10}^*y_{12}+x_{02}^*x_{30}^*y_{32}+x_{03}^*x_{20}^*y_{23}+x_{03}^*x_{40}^*y_{43}+x_{04}^*x_{30}^*y_{34} \\
&+x_{04}^*x_{10}^*y_{14}+x_{01}^*x_{40}^*y_{41}+x_{01}^*x_{30}^*y_{31}+x_{03}^*x_{10}^*y_{13}+x_{02}^*x_{40}^*y_{42}+x_{04}^*x_{20}^*y_{24}
\end{align*}
where we denote $[x_{ij}x_{jk}]=y_{ik}$ for brevity. By taking partial derivatives we find generators of $\partial\wt{W}$, hence relations in $P(\wt{Q},\wt{W})$, including
$$y_{13}+x_{01}^*x_{30}^*\quad y_{31}+x_{03}^*x_{10}^*\quad y_{42}+x_{04}^*x_{20}^*\quad y_{24}+x_{02}^*x_{40}^*$$
These relations allow us to eliminate the arrows $y_{13},y_{31},y_{24},y_{42}$ while maintaining the same Jacobian algebra up to isomorphism. The potential in this new algebra after the elimination step takes the form
\begin{align*}
&y_{41}x_{41}^*+y_{14}x_{14}^*+y_{32}x_{32}^*+y_{23}x_{23}^*+x_{01}^*x_{30}^*x_{03}^*x_{10}^*+x_{02}^*x_{40}^*x_{04}^*x_{20}^* \\
&+x_{01}^*x_{20}^*y_{21}+x_{02}^*x_{10}^*y_{12}+x_{02}^*x_{30}^*y_{32}+x_{03}^*x_{20}^*y_{23}+x_{03}^*x_{40}^*y_{43}+x_{04}^*x_{30}^*y_{34} \\
&+x_{04}^*x_{10}^*y_{14}+x_{01}^*x_{40}^*y_{41}
\end{align*}
The relations
$$x_{41}^*+x_{01}^*x_{40}^*\quad x_{14}^*+x_{04}^*x_{10}^*\quad x_{32}^*+x_{02}^*x_{30}^*\quad x_{23}^*+x_{03}^*x_{20}^*$$
allow us to remove the arrows $x_{41}^*,x_{14}^*,x_{32}^*,x_{23}^*,y_{14},y_{41},y_{23},y_{32}$ and, after cancellation, produces the final potential
$$W=x_{01}^*x_{30}^*x_{03}^*x_{10}^*+x_{02}^*x_{40}^*x_{04}^*x_{20}^*+x_{01}^*x_{20}^*y_{21}+x_{02}^*x_{10}^*y_{12}+x_{03}^*x_{40}^*y_{43}+x_{04}^*x_{30}^*y_{34}$$
\end{example}

Observe that it may not be possible to find a Jacobian-equivalent reduced QwP in general and so this mutation operation is only partially defined. We will later use computational methods to verify that the QwPs we consider are appropriately mutable.

\subsection{Abelian McKay correspondence \& toric geometry} \label{sec:abelian}

The McKay correspondence seeks to relate, through various media, the representation theory of a finite subgroup $G\subseteq\on{SL}_n(\C)$ with the geometry of the quotient singularity $\C^n/G=:\on{Spec}\C[x_1,\dots,x_n]^G$; that is, the spectrum of the ring of $G$-invariants. In practice we often pass to a minimal (or crepant) resolution of $\C^n/G$ when one exists or consider the $G$-equivariant geometry of $\C^n$ (which often coincides \cite{bkr_mck_01}). The story largely began with beautiful phenomenology in two dimensions where there is a bijection (with some extra structure) between components of the exceptional fibre of the unique minimal resolution of $\C^2/G$ and nontrivial irreducible representations of $G$. 

We will henceforth focus on the \emph{three dimensional} case and will further restrict our attention to the case whem $G$ \emph{is abelian}. In this setting the singularity $\C^3/G$ and its crepant resolutions are toric $3$-folds, enabling one to use combinatorial methods to study them. 

We briefly recall the setup for toric geometry and fix notation that we will use throughout the paper largely following \cite{cls_tor_11}. We write $G=\frac{1}{r}(a,b,c)$ to mean that $G\cong\Z/r$ is generated by the matrix
$$g=\mat{ccc}
\eps^a \\
& \eps^b \\
& & \eps^c\tam$$
where $\eps$ is a primitive $r$th root of unity.

Let $N=\Z^3$ and $N'=\Z^3+\Z\cdot(\tfrac{a}{r},\tfrac{b}{r},\tfrac{c}{r})$. Let $\sigma$ (resp.~$\sigma'$) denote the cone in $N_\R$ (resp.~in $N'_\R$) generated by the standard basis vectors $e_1=(1,0,0),e_2=(0,1,0),e_3=(0,0,1)$. The toric variety $U_\sigma$ associated to $\sigma$ is $\C^3$, and $U_{\sigma'}$ to $\sigma'$ is $\C^3/G$ with the natural map $N\subseteq N'$ inducing the quotient $\C^3\to\C^3/G$. Denote by $\Delta_1$ the slice $\sigma'\cap(e^1+e^2+e^3=1)$, where $e^i$ are the dual basis of $N^\vee_\R$ to $e_i$. We call $\Delta_1$ the \emph{junior simplex}. With this setup a crepant resolution of $\C^3/G$ corresponds to a triangulation of $\Delta_1$ with vertices in $N'$ such that each triangle is unimodal, meaning that its vertices form a $\Z$-basis of $N'$.

Let $\pi\colon Y\to\C^3/G$ be a crepant resolution and let $\mathcal{T}$ be the corresponding triangulation. We have the following correspondence between the geometry of $Y$ and the combinatorics of $\mathcal{T}$:
\begin{itemize}
\item torus-invariant exceptional curves in $Y\longleftrightarrow$ edges in $\mathcal{T}$
\item torus-invariant exceptional divisors in $Y\longleftrightarrow$ vertices in $\mathcal{T}$
\item torus-invariant compact exceptional divisors in $Y\longleftrightarrow$ interior vertices in $\mathcal{T}$
\item torus-fixed points in $Y\longleftrightarrow$ triangles in $\mathcal{T}$
\end{itemize}

A standard move in birational geometry to produce different minimal resolutions is the \emph{flop} in a suitable curve. The combinatorial version of the flop in a torus-invariant exceptional curve $C$ represented by an edge in a triangulation is \emph{flipping} the edge as shown in Fig.~\ref{fig:flip}.

\begin{figure}[h]
\begin{center}
\begin{tikzpicture}[scale=1.4]
\foreach \i in {1,...,2}
{
\node (a\i) at (0,\i){\tiny$\bullet$};
}
\foreach \i in {1,...,2}
{
\node (b\i) at (1,\i+0.5){\tiny$\bullet$};
}

\draw[line width = 1.2pt](a1.center) to (b2.center);
\draw (a1.center) to (a2.center);
\draw (a1.center) to (b1.center);
\draw (a2.center) to (b2.center);
\draw (b1.center) to (b2.center);

\foreach \i in {1,...,2}
{
\node (a\i) at (2,\i){\tiny$\bullet$};
}
\foreach \i in {1,...,2}
{
\node (b\i) at (3,\i+0.5){\tiny$\bullet$};
}

\draw[line width = 1.2pt](a2.center) to (b1.center);
\draw (a1.center) to (a2.center);
\draw (a1.center) to (b1.center);
\draw (a2.center) to (b2.center);
\draw (b1.center) to (b2.center);
\end{tikzpicture}
\end{center}
\caption{Flipping an edge}
\label{fig:flip}
\end{figure}
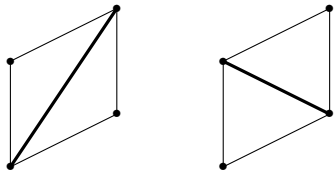

As we will explore further below, there are combinatorial constraints on whether this is possible; this translates in geometry to the normal bundle of the relevant curve being of suitable type \cite{pin_fac_83}.

There is a distinguished minimal resolution of $\C^3/G$: the $G$-Hilbert scheme $G\hilb\C^3$ studied in this context first by Nakamura \cite{nak_hil_01}. This is the moduli space of $G$-clusters; essentially scheme-theoretic $G$-orbits. Craw--Reid \cite{cr_how_02} describe a procedure for computing the triangulation for $G\hilb\C^3$, which works by dividing the junior simplex into a collection of `regular triangles' \cite[\S1.2]{cr_how_02} and then applying a standard subdivision to each of these pieces. We integrated this algorithm into our computational tools, which we flesh out below.

\subsection{Curve quivers} \label{sec:cq}

In many cases, given a resolution (or more generally a birational map) $\pi\colon Y\to X$ between $3$-folds one can construct a quiver $Q_\pi$ associated to $\pi$. In the case of resolutions we often denote $Q_\pi=Q_Y$ so long as context is clear. The construction is as follows:

\begin{itemize}
    \item $Q_\pi$ has a node for each irreducible curve in the intersection of two irreducible exceptional divisors. We will assume that all such curves are \emph{rational} for the following.
    \item Two vertices $i$ and $k$ corresponding to distinct curves $C_i$ and $C_j$ have a $2$-cycle between them if $C_i\cap C_j\not=\emptyset$. Otherwise, $i$ and $j$ have no arrows between them.
    \item Let $i$ be the node corresponding to a curve $C_i\subseteq D_1\cap D_2$ for exceptional divisors $D_1,D_2$. The number of loops at $i$ is the number $n$ determined by the degrees of the normal bundles $\mathcal{N}_{D_k/C_i}$ of $C$ inside $D_1$ and $D_2$. Namely, since $C_i$ is rational there is $n\in\Z$ such that $\mathcal{N}_{D_1/C_i}=\mO_{C_i}(-n)$ and $\mathcal{N}_{D_2/C_i}=\mO_{C_i}(n-2)$.
\end{itemize}

In the case where $\pi$ is toric or a resolution of a polyhedral singularity (where the exceptional fibre is one-dimensional) each of the curves $C_i$ are rational and thus the curve quiver is well-defined in this context.

\begin{example}
    Let $G=\frac{1}{6}(1,2,3)$. The toric picture of the crepant resolution $\pi\colon G\hilb\C^3\to\C^3/G$ is shown in Fig.~\ref{fig:1/6b}(a). Using the dictionary from \S\ref{sec:abelian} the curve quiver of $\pi$ can be computed to be as in Fig.\ref{fig:1/6b}(b).
    
    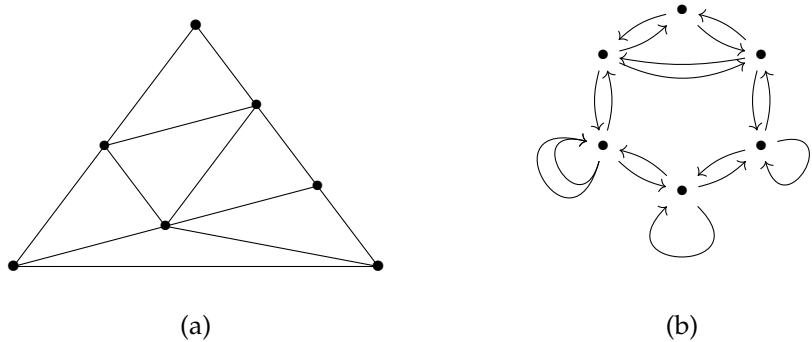
\begin{figure}[h]
    \begin{center}
    \begin{tikzpicture}[scale=0.8]

\node(e1) at (0-8,2){$\bullet$};
\node(e2) at (3-8,-2){$\bullet$};
\node(e3) at (-3-8,-2){$\bullet$};

\small

\node(123) at (-0.5-8,-4/3){$\bullet$};
\node(240) at (2-8,2-8/3){$\bullet$};
\node(420) at (1-8,2-4/3){$\bullet$};
\node(303) at (-1.5-8,0){$\bullet$};

\draw[-] (e2.center) to (123.center);
\draw[-] (e3.center) to (123.center) to (240.center);
\draw[-] (303.center) to (420.center);
\draw[-] (123.center) to (303.center);
\draw[-] (123.center) to (420.center);

\draw[-] (e1.center) to (e2.center) to (e3.center) to (e1.center);

\begin{scope}[scale=1.5,yshift=0.5cm]
	\node (a1) at ({sin(0)},{cos(0)}) {$\bullet$};
	\node (a2) at ({sin(60)},{cos(60)}) {$\bullet$};
	\node (a3) at ({sin(120)},{cos(120)}) {$\bullet$};
	\node (a4) at ({sin(180)},{cos(180)}) {$\bullet$};
	\node (a5) at ({sin(240)},{cos(240)}) {$\bullet$};
	\node (a6) at ({sin(300)},{cos(300)}) {$\bullet$};
	\draw[->] (a1) to [bend right=15] (a2);
	\draw[->] (a2) to [bend right=15] (a3);
	\draw[->] (a3) to [bend right=15] (a4);
	\draw[->] (a4) to [bend right=15] (a5);
	\draw[->] (a5) to [bend right=15] (a6);
	\draw[->] (a6) to [bend right=15] (a1);
	\draw[->] (a2) to [bend right=15] (a1);
	\draw[->] (a3) to [bend right=15] (a2);
	\draw[->] (a4) to [bend right=15] (a3);
	\draw[->] (a5) to [bend right=15] (a4);
	\draw[->] (a6) to [bend right=15] (a5);
	\draw[->] (a1) to [bend right=15] (a6);
	\draw[->] (a2) to [bend right=-10] (a6);
	\draw[->] (a6) to [bend right=25] (a2);
	
	\draw[->] (a3) to [out=-120+90+45,in=-120+90-45,looseness=8] (a3);
	\draw[->] (a4) to [out=-180+90+45,in=-180+90-45,looseness=8] (a4);
	\draw[->] (a5) to [out=-240+90+45,in=-240+90-45,looseness=8] (a5);
	\draw[->] (a5) to [out=-240+90+45,in=-240+90-45,looseness=12] (a5);
	\end{scope}
 
\node (la) at (-8,-3){(a)};
\node (la) at (0,-3){(b)};

 \end{tikzpicture}
 \end{center}
\caption{$G$-Hilb and its curve quiver for $\frac{1}{6}(1,2,3)$}
\label{fig:1/6b}
\end{figure}
\end{example}

In general for toric crepant $3$-fold resolutions one can read the curve quiver from the associated triangulation. Each edge in the triangulation corresponds to a rational curve obtained as the intersection of two exceptional divisors (i.e.~a node in the curve quiver), intersections between curves (i.e.~$2$-cycles in the curve quiver) are described by the two corresponding edges partially bounding a triangle, and the normal bundles (i.e.~loops in the curve quiver) can be read off from suitable linear relations between vertices in the triangulation \cite[\S6.2]{cls_tor_11}.

In more detail, for a node $i$ corresponding to an edge $e$ of a triangulation, the number of loops at $i$ is based on the linear relations between the edges of the quadrilateral for which $e$ is the diagonal. If the vertices of the quadrilateral are enumerated $v_1,v_2,v_3,v_4$ with $e$ linking $v_1$ and $v_3$ then there is a relation 
$$v_1 + v_3 = \alpha v_1 + \beta v_2$$
The constraint that all vertices live in the simplex at height one forces $\alpha+\beta=2$. There are three possibilities:
$$\{\alpha,\beta\}\quad=\quad\{1,1\}\quad\{0,2\}\quad\{1-n,n+1\}\text{ for }n\geq3$$
We depict these three possibilities in Fig.~\ref{fig:quad}(a)-(c). In each case, the normal bundles of the curve $C$ corresponding to $e$ inside the exceptional divisors corresponding to $v_1$ and $v_3$ are $\mO_C(-\alpha)$ and $\mO_C(-\beta)$ respectively.

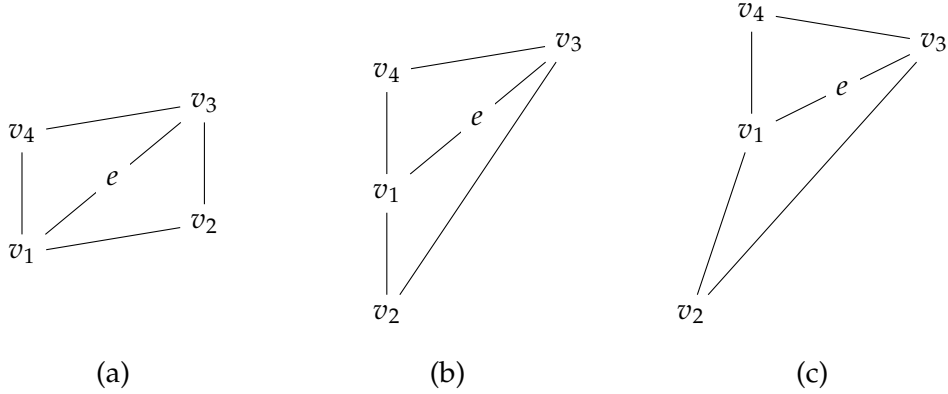
\begin{figure}[h]
    \centering
    \begin{tikzpicture}[scale=0.8]
    \node (a) at (-1,2){$v_4$};
    \node (b) at (-1,0){$v_1$};
    \node (c) at (2,.5){$v_2$};
    \node (d) at (2,2.5) {$v_3$};
    \draw (a) -- (b) -- (c) -- (d) -- (a);
    \draw (b) -- node[fill=white] {$e$} (d);

    \node (a) at (5,3){$v_4$};
    \node (b) at (5,1){$v_1$};
    \node (c) at (5,-1){$v_2$};
    \node (d) at (8,3.5) {$v_3$};
    \draw (a) -- (b) -- (c) -- (d) -- (a);
    \draw (b) -- node[fill=white] {$e$} (d);

    \node (a) at (11,4){$v_4$};
    \node (b) at (11,2){$v_1$};
    \node (c) at (10,-1){$v_2$};
    \node (d) at (14,3.5) {$v_3$};
    \draw (a) -- (b) -- (c) -- (d) -- (a);
    \draw (b) -- node[fill=white] {$e$} (d);

    \node (l1) at (0.5,-2){(a)};
    \node (l2) at (6,-2){(b)};
    \node (l3) at (12,-2){(c)};
    \end{tikzpicture}
\caption{Relations from quadrilaterals}
\label{fig:quad}
\end{figure}

Thus, (a) is the case where $n=0$ and the corresponding node has no loops, (b) is where $n=1$ and the node has one loop, and (c) is where $n\geq 2$ and the node has two or more loops. In case (a) the edge $e$ can be flipped to obtain a new triangulation; corresponding to the geometric fact that only curves with normal bundle of type $\mO(-1,-1)$ can be flopped in the toric setting.

\vspace{1em}

We conclude by noting that we need not restrict ourselves to resolutions of abelian quotient singularities; for the triangulation and curve quiver machinery to apply, we could broaden our scope to include all crepant resolutions of Gorenstein toric singularities, which have recently been studied in many analogous ways to quotient singularities in terms of dimer models \cite{cht_com_20,iu_dim_16}.

\section{The space of potentials}

Fixing a quiver, we consider various ways of understanding the set of potentials on that quiver. This set, namely, the algebra $\kqcyc$ is naturally a countably infinite dimensional $k$-vector space but this description is inconvenient for most purposes. We introduce various stratifications of $\kqcyc$ that better serve our interests.

\subsection{Invariants of potentials}

Each of the stratifications we consider arise from numerical or combinatorial invariants of QwPs.

\begin{definition}
    Let $Q$ be a quiver. The \emph{total exchange number} $\on{ten}(W)$ of a potential $W\in\kqcyc$ is the number of distinct QwPs mutation-equivalent to $(Q,W)$. The \emph{exchange number} $\on{en}(W)$ of $W$ is the number of distinct quivers $Q'$ such that there exists a potential $W\in kQ'_\text{cyc}$ with $(Q',W')$ mutation-equivalent to $(Q,W)$.
\end{definition}

\begin{example}
    Data on exchange number stratification for $\frac{1}{6}(1,2,3)$, calculated from constructing 1 million randomly-generated potentials and finding all distinct quivers for each.
        \begin{center}
        \begin{tabular}{c|c}
             Exchange Number & Prevalence \\
             \hline
             1 & 32.7\% \\
             2 & 20.4\% \\
             3 & 19.7\% \\
             4 & 20.8\% \\
             5 & 5.0\% \\
             6 & 1.3\% \\
        \end{tabular}
    \end{center}
\end{example}

As we will discuss at a later point, the exchange number is slightly too coarse a measurement for our purposes. However, it will still prove to be useful to refine our searches for interesting potentials. We next introduce the more refined invariant that will be more fruitful for us to consider.

\begin{definition}
    Let $Q$ be a quiver. We define the \emph{quiver set} $\on{qs}(W)$ of a potential $W$ on $Q$ to be the set of all quivers $Q'$ admitting a potential $W'$ such that $(Q',W')$ is mutation-equivalent to $(Q,W)$.
\end{definition}

Given a set $S$ of quivers including $Q$, we obtain a subset of the space of potentials $\kqcyc$ consisting of all potentials $W$ with $\on{qs}(W)=S$. Of course, most of these subsets will be empty. Observe that
$$\{W\in\kqcyc:\on{qs}(W)=S\}\subseteq\{W\in\kqcyc:\on{en}(W)=|S|\}$$

\begin{example}
    We provide some data on the quiver set stratification for $G=\frac{1}{6}(1,2,3)$. We obtained this data by constructing one million randomly-generated potentials with up to 30 terms and finding the quiver set for each. We then group the quiver sets by checking whether the nodes have matching degrees as an approximation for determining whether they are isomorphic; a true check for isomorphism is infeasible to compute in a reasonable amount of time.

\vspace{1em}

    In total, we found 3026 distinct sets. The five most frequently represented sets had the following numbers of potentials associated with them. We also include the exchange number of each stratum; that is, the size of the respective quiver sets.
    \begin{center}
        \begin{tabular}{c | c}
             Number of Potentials & Exchange Number \\
             \hline
             325963 & 1 \\
             56740 & 2 \\
             46419 & 2 \\
             19789 & 3 \\
             18668 & 2
        \end{tabular}
    \end{center}
    Only a small number of quiver sets occur with this many potentials. 51\% (1542) of the quiver sets appeared 10 or fewer times, and 16\% (489) appeared only once in our testing.

\vspace{1em}
    
    We also examined the distribution of quiver sets by exchange number.
    \begin{center}
        \begin{tabular}{c | c}
             Exchange Number & Number of Distinct Quiver Sets Found \\
             \hline
             1 & 1 \\
             2 & 19 \\
             3 & 139 \\
             4 & 580 \\
             5 & 1105 \\
             6 & 1182
        \end{tabular}
    \end{center}
\end{example}

In our applications we have a set of quivers already in mind -- curve quivers of crepant resolutions of singularities -- and so it is an interesting open question as to whether the corresponding subset of potentials in empty and, if not, how large it is. In the next subsection we discuss how to find a distinguished representative of this subset when it is nonempty.

\vspace{1em}

The final and finest invariant we draw on is the \emph{exchange graph} of a potential.

\begin{definition}
    Let $Q$ be a quiver. We define the \emph{exchange graph} $\on{eg}(W)$ of a potential $W$ on $Q$ to be the following labelled graph. The underlying graph $\Gamma(W)$ has a node for each QwP mutation-equivalent to $(Q,W)$ and an edge between two nodes if there is a (one-step) mutation between the corresponding QwPs. Each node is labelled by the corresponding QwP, which we encode as a function $q\colon \Gamma(W)_0\to\mathcal{FQ}$, where $\mathcal{FQ}$ is the set of finite quivers.
\end{definition}

When $W=0$ and $Q$ has no loops or $2$-cycles this is simply the exchange graph of the cluster algebra corresponding to $Q$ labelled by the appropriate quiver mutation at each step. This clearly gives rise to the finest stratification we consider here: if $(\Gamma,q)$ is a labelled graph as above we have
$$\{W\in\kqcyc:\on{eg}(W)=(\Gamma,q)\}\subseteq\{W\in\kqcyc:\on{qs}(W)=q(\Gamma_0)\}$$

\begin{example}
    Data on exchange graph stratification for $\frac{1}{6}(1,2,3)$. (i.e.~how many of the quivers with the right quiver set actually have the right exchange graph? my guess is all or most).
\end{example}

\subsection{Minimal potentials}

Let $Q$ be a quiver. We say that the \emph{support} $\on{supp}(W)$ of a potential $W$ on $Q$ is the set of cycles appearing as monomials in $W$.

\begin{definition}
    Let $Q$ be a quiver. We say that a potential $W$ on $Q$ is \emph{minimal} if there is no potential $W'$ on $Q$ with $\on{supp}(W')\subsetneq \on{supp}(W)$ such that $\on{qs}(W)=\on{qs}(W')$.
\end{definition}

\begin{lemma} \label{lem:deg_3}
    Let $(Q,W)$ be a quiver with potential whose exchange graph is identified with the flip graph of a triangulation. Let $i\in Q_0$ be a loopless node and assume that an arrow $x_{jk}$ is removed by mutation at $i$. If $j\not=k$ then $W$ contains
    $$x_{ijk}:=x_{ij}x_{jk}x_{ki}+x_{ik}x_{kj}x_{ji}$$
    If $j=k$ then $W$ contains
    $$x_{ijj}:=x_{ij}x_{jj}x_{ji}$$
\end{lemma}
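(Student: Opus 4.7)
The plan is to analyze the DWZ mutation at $i$ and identify which cubic monomials of $W$ are responsible for eliminating $x_{jk}$ during the reduction step. Recall the recipe: one first forms $\widetilde{Q}$ by reversing arrows at $i$ and adding composite arrows $[\alpha\beta]\colon t(\alpha)\to h(\beta)$ for each $\alpha,\beta$ with $h(\alpha)=i=t(\beta)$, and then forms $\widetilde{W}$ by substituting each composition $\alpha\beta$ in $W$ by $[\alpha\beta]$ and appending terms $\beta^*\alpha^*[\alpha\beta]$. The appended terms are cubic, so the quadratic part of $\widetilde{W}$ arises entirely from the substitution applied to 3-cycles of $W$ passing through $i$: the cycle $x_{ab}x_{bi}x_{ia}$ (with $a,b\neq i$) yields the quadratic monomial $x_{ab}[x_{bi}x_{ia}]$ in $\widetilde{W}$, and this is the \emph{unique} 3-cycle of $W$ contributing that monomial.

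The DWZ reduction eliminates arrows in pairs via 2-cycles in the quadratic part of $\widetilde{W}$. For $x_{jk}$ to drop out the quadratic part must pair $x_{jk}$ with some arrow $\delta$ from $k$ to $j$ in $\widetilde{Q}$. The only candidates are the original arrow $x_{kj}$ (which cannot contribute a quadratic term via substitution, since $(Q,W)$ is reduced and $x_{kj}$ is fixed by mutation at $i$) and composite arrows; the unique composite from $k$ to $j$ is $[x_{ki}x_{ij}]$. Hence $\widetilde{W}$ must contain $x_{jk}[x_{ki}x_{ij}]$, and by the previous paragraph this forces $W$ to contain the cubic $x_{ij}x_{jk}x_{ki}$. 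This already settles the loop case $j=k$: the partner of $x_{jj}$ in the 2-cycle must be the composite loop $[x_{ji}x_{ij}]$, forcing $x_{ij}x_{jj}x_{ji}\in W$, and no reverse 3-cycle is required because a loop serves as its own partner in the elimination.

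For $j\neq k$ we use the identification of the exchange graph of $(Q,W)$ with the flip graph of a triangulation. Under this identification mutation at $i$ corresponds to flipping the triangulation edge $e$ associated with $i$, and the 2-cycle between $j$ and $k$ in $Q$ reflects that the triangulation edges $j,k$ share the triangle containing $e$. After the flip that triangle is destroyed and $j,k$ no longer share any triangle, so the entire 2-cycle $(x_{jk},x_{kj})$ disappears from the mutated quiver. Applying the preceding analysis with the roles of $x_{jk}$ and $x_{kj}$ interchanged then forces $x_{ik}x_{kj}x_{ji}\in W$, so $W$ contains both summands of $x_{ijk}$, as claimed.

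The main obstacle is making the second paragraph fully rigorous: in general the DWZ reduction eliminates a given pair of arrows only when the associated submatrix of the quadratic form on arrows is non-degenerate and does not get cancelled by subsequent substitutions. The uniqueness statement of the first paragraph is what rules out cancellation of the relevant monomial against other contributions, but carefully tracking this through the full reduction (rather than just its first step) is where the argument needs to be written out with care.
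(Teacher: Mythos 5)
Your proof is correct and takes essentially the same route as the paper's: both reduce to identifying the $3$-cycle $x_{ij}x_{jk}x_{ki}$ as the only possible source of a quadratic term of $\widetilde{W}$ involving the arrow $x_{jk}$ (the paper by enumerating ten candidate monomials and discarding most of them, you by arguing directly from the form of the substitution step), and both then invoke the triangulation structure to force the reverse summand $x_{ik}x_{kj}x_{ji}$. The one slip is your claim that the quadratic part of $\widetilde{W}$ arises entirely from $3$-cycles through $i$ --- a $4$-cycle such as $x_{ji}x_{ik}x_{ki}x_{ij}$ (the paper's candidate $x'_{ijk}$) also contributes the quadratic monomial $[x_{ji}x_{ik}][x_{ki}x_{ij}]$ --- but this does not affect your argument, since such terms involve only composite arrows and therefore cannot pair with the original arrow $x_{jk}$.
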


\begin{proof}
    Let $m$ be a term causing $x_{jk}$ to be removed. There are ten possibilities for $m$, most of which are outlawed by the condition that $i$ is loopless and by analysing their behaviour under mutation to see if edges are removed or not. From this list only $x_{ijj}$ removes $x_{jk}$ when $j=k$. The two terms that remove an edge from $j$ to $k$ are
    $$x_{ij}x_{jk}x_{ki}\qquad x_{ijk}':=x_{ji}x_{ik}x_{ki}x_{ij}$$
    The latter does not in fact remove any of the original edges from $Q$ under mutation, hence $W$ must contain the former. Let $\mu_1(Q,W)=(Q',W')$. Since $Q'$ is the quiver of a triangulation we must also remove $x_{kj}$ in this case and so $W$ must also contain $x_{ik}x_{kj}x_{ji}$.
\end{proof}

Given a triangulation $\mathcal{T}$ as in \S\ref{sec:abelian} we can define a labelled graph $(\Gamma,q)$ called the \emph{labelled flip graph} of $\mathcal{T}$ via:
\begin{itemize}
    \item the nodes of $\Gamma$ correspond to the triangulations related to $\mathcal{T}$ by a series of flips,
    \item two nodes are connected by an edge if there is a single flip relating them,
    \item the value of a node under $q$ is the curve quiver of the corresponding triangulation (or toric resolution).
\end{itemize}

\begin{prop} \label{prop:minimal}
    Let $Q$ be the curve quiver of a triangulation (or toric resolution). There is at most one minimal potential (up to generic choice of coefficients) on $Q$ whose exchange graph is identified with the labelled flip graph of the triangulation.
\end{prop}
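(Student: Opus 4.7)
The plan is to show that any two minimal potentials $W_1,W_2$ on $Q$ whose exchange graphs are both identified with the labelled flip graph $(\Gamma,q)$ must share the same support; genericity of coefficients then gives the conclusion. The strategy is to extract a canonical set of forced cycles $S\subseteq\kqcyc$ from the combinatorics of $(\Gamma,q)$ alone and then to use both minimality and $S$ to squeeze $\on{supp}(W_1)$ and $\on{supp}(W_2)$ onto a common set.

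First I would walk the labelled exchange graph. At each vertex $v$ of $(\Gamma,q)$ the labelling gives the quiver $q(v)$, and each outgoing edge corresponds to a mutation at a specific node $k$ for which the neighbouring quiver $q(v')$ differs from $q(v)$ by a prescribed collection of arrow removals. Lemma \ref{lem:deg_3} forces specific cubic monomials (of the form $x_{ijk}$ or $x_{ijj}$) to appear in the potential $W_{i,v}$ sitting at $v$, precisely to guarantee that the right arrows are cancelled during the reduction step of mutation. Unwinding the explicit mutation formulas from \S\ref{sec:qwp_mut}, each such forced cubic at $W_{i,v}$ pulls back via the sequence of mutations from the root to a well-defined cycle in $\kqcyc$; note that these pulled-back cycles may have degree larger than three, since mutation can concatenate arrows into longer cycles. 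Letting $S$ be the collection of all such cycles, obtained by ranging over every vertex $v$ and every outgoing edge, we get a set depending only on $(\Gamma,q)$ with the property that $\on{supp}(W_i)\supseteq S$ for $i=1,2$.

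To conclude I would argue that the potential $W_S:=\sum_{c\in S}\lambda_c\,c$ with generic coefficients already satisfies $\on{qs}(W_S)=q(\Gamma_0)$. Given this, the minimality of each $W_i$ rules out strict inclusion $S\subsetneq\on{supp}(W_i)$: otherwise $W_S$ would exhibit a potential with strictly smaller support and the same quiver set, contradicting minimality. Hence $\on{supp}(W_i)=S$ for $i=1,2$, and the two minimal potentials are supported on the same set of cycles, differing only in their (generic) coefficients.

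The main obstacle is the sufficiency claim that $W_S$ realises the target quiver set. Lemma \ref{lem:deg_3} only guarantees that the forced cubics are \emph{necessary} at each mutation; showing they are also \emph{sufficient} -- so that no spurious arrows appear, no additional higher-degree terms are needed to carry out the reduction step in the DWZ algorithm, and no unexpected Jacobian-equivalences collapse the support -- requires a careful induction along $(\Gamma,q)$. The natural induction is on the distance from the root, with the base case handled directly by Lemma \ref{lem:deg_3}, and the inductive step requiring one to track how the cubic terms produced at intermediate vertices by mutating $W_S$ interact with the forced cubics prescribed by the next mutation. Verifying that this propagation remains compatible with the labelling $q$ throughout the graph is the delicate technical point.
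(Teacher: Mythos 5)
Your setup is on the right track --- Lemma \ref{lem:deg_3} is indeed the engine, and the inclusion you establish (every forced cubic, pulled back along the mutation sequence, lies in the support of both $W_1$ and $W_2$) is exactly the step the paper uses to get a monomial of $W_1$ into $W_2$. But your concluding step has a genuine gap: you need the \emph{sufficiency} claim that the potential $W_S$ supported exactly on the forced set already realises the target quiver set, and you only sketch this as ``a careful induction along $(\Gamma,q)$'' without carrying it out. That claim is substantially harder than the proposition itself --- it amounts to exhibiting an explicit potential that works, tracking reducibility of every intermediate QwP along every mutation path, which is precisely the part of the story the paper only verifies computationally and which is known to fail for some inputs (see the non-reducible potential example for $\tfrac{1}{12}(1,3,8)$). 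Without it, you cannot rule out $S\subsetneq\on{supp}(W_1)$ and $S\subsetneq\on{supp}(W_2)$ with \emph{different} extra monomials, so the argument does not close.

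The fix, and the route the paper takes, is to use minimality of $W_1$ in the opposite direction: take a monomial $m\in\on{supp}(W_1)\setminus\on{supp}(W_2)$ and observe that if $m$ never caused a cancellation under any mutation sequence, then deleting $m$ would leave the exchange graph unchanged, contradicting minimality of $W_1$. Hence there is a sequence $\mu_I$ after which the image of $m$ is a cancelling cubic $x_{ij}x_{jk}x_{ki}$; since $W_2$ has the same labelled exchange graph, Lemma \ref{lem:deg_3} forces that cubic to appear in $\mu_I(Q,W_2)$ as well, and inverting $\mu_I$ puts $m$ into $\on{supp}(W_2)$ --- contradiction, and symmetry finishes. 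In other words, minimality gives you $\on{supp}(W_i)\subseteq S$ for free as a \emph{necessity} statement, so you never need to prove that $S$ alone suffices. You should replace your sufficiency induction with this direct comparison of the two potentials.
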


\begin{proof}
Let $Q$ be the quiver of a triangulation, let $W_1$ and $W_2$ be minimal potentials generating the exchange graph of the triangulation. Let $m$ be a monomial in $W_1$ not in $W_2$. As $W_1$ is minimal there has to be a sequence $\mu_I$ of mutations where $m$ causes cancellation at the next step, mutation in $i_n$. Let $(Q',W_i')=\mu_I(Q,W_i)$. Then the image of $m$ under $\mu_I$ is of the form
$$x_{ij}x_{jk}x_{ki}$$
By Lem.~\ref{lem:deg_3} this term is also in $W_2'$ because $Q'$ is the quiver of a triangulation. Inverting the sequence of mutations yields that $m$ is also a term in $W_2$, contradiction.
\end{proof}

\section{The \texttt{qwp\_mutations} application} \label{sec:app}

To explore the properties of quivers with potentials we developed a web-based tool for viewing and manipulating them. The \texttt{qwp\_mutations} application displays a visualization of a given quiver and has an interface that allows for constructing an arbitrary quiver. It also contains an implementation of the mutation process described in \S\ref{sec:qwp_mut}.

\subsection{Triangulation}

As discussed above, for a finite subgroup $G=\frac{1}{r}(a,b,c)$ of $\on{SL}_3(\C)$ the triangulation for $G\hilb\C^3$ can be calculated directly using the Craw--Reid procedure \cite{cr_how_02}. We implemented this algorithmically in the \texttt{qwp\_mutations} application.

\begin{remark} \label{rem:moc}
There is a special case in the Craw--Reid procedure referred to as the ``meeting of champions'' \cite{cr_how_02}. This produces a set of internal rays that cannot be extended to create a partition of the simplex face into regular triangles using the Jung--Hirzebruch continued fraction method, and such cases are considered as a class of problems for which the current implementation is not equipped. The \texttt{triangulation} routine provided in the \texttt{qwp\_mutations} application does not produce a valid triangulation for such cases.
\end{remark}

\subsection{Determining the potential for a quiver set}

The \texttt{quiver\_mutations} application also implements a heuristics-based routine that accepts a set of quivers generated from the triangulation tool, and attempts to find a potential that generates the expected set of quivers when paired with the first quiver in the set.

\vspace{1em}

Given one quiver, a node at which to mutate, and the expected resulting quiver we can compare the expected to the actual result of the mutation using the current potential and determine which edges should be removed. We can then construct a potential term that removes each edge; by Lem.~\ref{lem:deg_3} there is only a single set of potential terms that can be added to remove a given edge. We also rely on the fact that mutating at a node only affects terms involving nodes that are separated from the mutated node by a single arrow. Therefore, given two quivers, we can find a list of nodes that could possibly be mutated at to cause the first quiver to be mutated into the second quiver.

\begin{example} We include the curve quivers for $G=\frac{1}{6}(1,2,3)$ in Fig.~\ref{fig:1/6c} arranged by flips in the corresponding triangulations. It is clear from this figure that there is only one possible mutation connecting each pair of adjacent quivers and no possible mutation connecting non-adjacent quivers. For instance, to move from the left-most to the second-from-left quiver the only option is to mutate at the top-most node since it is the only loop-less node. To move to the next quiver to the right, the only option is to mutate at the top-right node, since loops are induced or removed at all adjacent nodes. To instead move downward to the bottom-most quiver the only option is to mutate at the top-left node, either by considering added/removed loops or by noting that the only loop-less node in the mutated quiver is the top-right.

\begin{figure}[h]
    {\begin{center}
\begin{tikzpicture}[scale=0.85]
	\node (a1) at ({sin(0)},{cos(0)}) {$\bullet$};
	\node (a2) at ({sin(60)},{cos(60)}) {$\bullet$};
	\node (a3) at ({sin(120)},{cos(120)}) {$\bullet$};
	\node (a4) at ({sin(180)},{cos(180)}) {$\bullet$};
	\node (a5) at ({sin(240)},{cos(240)}) {$\bullet$};
	\node (a6) at ({sin(300)},{cos(300)}) {$\bullet$};
	\draw[->] (a1) to [bend right=15] (a2);
	\draw[->] (a2) to [bend right=15] (a3);
	\draw[->] (a3) to [bend right=15] (a4);
	\draw[->] (a4) to [bend right=15] (a5);
	\draw[->] (a5) to [bend right=15] (a6);
	\draw[->] (a6) to [bend right=15] (a1);
	\draw[->] (a2) to [bend right=15] (a1);
	\draw[->] (a3) to [bend right=15] (a2);
	\draw[->] (a4) to [bend right=15] (a3);
	\draw[->] (a5) to [bend right=15] (a4);
	\draw[->] (a6) to [bend right=15] (a5);
	\draw[->] (a1) to [bend right=15] (a6);
	
	\draw[->] (a2) to [out=-60+90+45, in=-60+90-45,looseness=8] (a2);
	\draw[->] (a3) to [out=-120+90+45,in=-120+90-45,looseness=8] (a3);
	\draw[->] (a4) to [out=-180+90+45,in=-180+90-45,looseness=8] (a4);
	\draw[->] (a5) to [out=-240+90+45,in=-240+90-45,looseness=8] (a5);
	\draw[->] (a5) to [out=-240+90+45,in=-240+90-45,looseness=12] (a5);
	\draw[->] (a6) to [out=-300+90+45,in=-300+90-45,looseness=8] (a6);
	
	\begin{scope}[xshift=4cm]
	\node (a1) at ({sin(0)},{cos(0)}) {$\bullet$};
	\node (a2) at ({sin(60)},{cos(60)}) {$\bullet$};
	\node (a3) at ({sin(120)},{cos(120)}) {$\bullet$};
	\node (a4) at ({sin(180)},{cos(180)}) {$\bullet$};
	\node (a5) at ({sin(240)},{cos(240)}) {$\bullet$};
	\node (a6) at ({sin(300)},{cos(300)}) {$\bullet$};
	\draw[->] (a1) to [bend right=15] (a2);
	\draw[->] (a2) to [bend right=15] (a3);
	\draw[->] (a3) to [bend right=15] (a4);
	\draw[->] (a4) to [bend right=15] (a5);
	\draw[->] (a5) to [bend right=15] (a6);
	\draw[->] (a6) to [bend right=15] (a1);
	\draw[->] (a2) to [bend right=15] (a1);
	\draw[->] (a3) to [bend right=15] (a2);
	\draw[->] (a4) to [bend right=15] (a3);
	\draw[->] (a5) to [bend right=15] (a4);
	\draw[->] (a6) to [bend right=15] (a5);
	\draw[->] (a1) to [bend right=15] (a6);
	\draw[->] (a2) to [bend right=-10] (a6);
	\draw[->] (a6) to [bend right=25] (a2);
	
	\draw[->] (a3) to [out=-120+90+45,in=-120+90-45,looseness=8] (a3);
	\draw[->] (a4) to [out=-180+90+45,in=-180+90-45,looseness=8] (a4);
	\draw[->] (a5) to [out=-240+90+45,in=-240+90-45,looseness=8] (a5);
	\draw[->] (a5) to [out=-240+90+45,in=-240+90-45,looseness=12] (a5);
	\end{scope}
	
	\begin{scope}[xshift=4cm, yshift=-4.5cm]
	\node (a1) at ({sin(0)},{cos(0)}) {$\bullet$};
	\node (a2) at ({sin(60)},{cos(60)}) {$\bullet$};
	\node (a3) at ({sin(120)},{cos(120)}) {$\bullet$};
	\node (a4) at ({sin(180)},{cos(180)}) {$\bullet$};
	\node (a5) at ({sin(240)},{cos(240)}) {$\bullet$};
	\node (a6) at ({sin(300)},{cos(300)}) {$\bullet$};
	\draw[->] (a2) to [bend right=15] (a3);
	\draw[->] (a3) to [bend right=15] (a4);
	\draw[->] (a4) to [bend right=15] (a5);
	\draw[->] (a5) to [bend right=15] (a6);
	\draw[->] (a6) to [bend right=15] (a1);
	\draw[->] (a3) to [bend right=15] (a2);
	\draw[->] (a4) to [bend right=15] (a3);
	\draw[->] (a5) to [bend right=15] (a4);
	\draw[->] (a6) to [bend right=15] (a5);
	\draw[->] (a1) to [bend right=15] (a6);
	\draw[->] (a2) to [bend right=-10] (a6);
	\draw[->] (a6) to [bend right=25] (a2);
	\draw[->] (a2) to [bend right=-10] (a5);
	\draw[->] (a5) to [bend right=25] (a2);
	
	\draw[->] (a1) to [out=-0+90+45,in=-0+90-45,looseness=8] (a1);
	\draw[->] (a2) to [out=-60+90+45,in=-60+90-45,looseness=8] (a2);
	\draw[->] (a3) to [out=-120+90+45,in=-120+90-45,looseness=8] (a3);
	\draw[->] (a4) to [out=-180+90+45,in=-180+90-45,looseness=8] (a4);
	\draw[->] (a5) to [out=-240+90+45,in=-240+90-45,looseness=8] (a5);
	\draw[->] (a5) to [out=-240+90+45,in=-240+90-45,looseness=10] (a5);
	\draw[->] (a5) to [out=-240+90+45,in=-240+90-45,looseness=12] (a5);
	\end{scope}
	
	\begin{scope}[xshift=8cm]
	\node (a1) at ({sin(0)},{cos(0)}) {$\bullet$};
	\node (a2) at ({sin(60)},{cos(60)}) {$\bullet$};
	\node (a3) at ({sin(120)},{cos(120)}) {$\bullet$};
	\node (a4) at ({sin(180)},{cos(180)}) {$\bullet$};
	\node (a5) at ({sin(240)},{cos(240)}) {$\bullet$};
	\node (a6) at ({sin(300)},{cos(300)}) {$\bullet$};
	\draw[->] (a1) to [bend right=15] (a2);
	\draw[->] (a2) to [bend right=15] (a3);
	\draw[->] (a3) to [bend right=15] (a4);
	\draw[->] (a4) to [bend right=15] (a5);
	\draw[->] (a5) to [bend right=15] (a6);
	\draw[->] (a2) to [bend right=15] (a1);
	\draw[->] (a3) to [bend right=15] (a2);
	\draw[->] (a4) to [bend right=15] (a3);
	\draw[->] (a5) to [bend right=15] (a4);
	\draw[->] (a6) to [bend right=15] (a5);
	\draw[->] (a2) to [bend right=-10] (a6);
	\draw[->] (a6) to [bend right=25] (a2);
	\draw[->] (a3) to [bend right=-10] (a6);
	\draw[->] (a6) to [bend right=25] (a3);
	
	\draw[->] (a1) to [out=-0+90+45,in=-0+90-45,looseness=8] (a1);
	\draw[->] (a4) to [out=-180+90+45,in=-180+90-45,looseness=8] (a4);
	\draw[->] (a5) to [out=-240+90+45,in=-240+90-45,looseness=8] (a5);
	\draw[->] (a5) to [out=-240+90+45,in=-240+90-45,looseness=12] (a5);
	\draw[->] (a6) to [out=-300+90+45,in=-300+90-45,looseness=8] (a6);
	\end{scope}
	
	\begin{scope}[xshift=12cm]
	\node (a1) at ({sin(0)},{cos(0)}) {$\bullet$};
	\node (a2) at ({sin(60)},{cos(60)}) {$\bullet$};
	\node (a3) at ({sin(120)},{cos(120)}) {$\bullet$};
	\node (a4) at ({sin(180)},{cos(180)}) {$\bullet$};
	\node (a5) at ({sin(240)},{cos(240)}) {$\bullet$};
	\node (a6) at ({sin(300)},{cos(300)}) {$\bullet$};
	\draw[->] (a1) to [bend right=15] (a2);
	\draw[->] (a2) to [bend right=15] (a3);
	\draw[->] (a3) to [bend right=15] (a4);
	\draw[->] (a4) to [bend right=15] (a5);
	\draw[->] (a5) to [bend right=15] (a6);
	\draw[->] (a2) to [bend right=15] (a1);
	\draw[->] (a3) to [bend right=15] (a2);
	\draw[->] (a4) to [bend right=15] (a3);
	\draw[->] (a5) to [bend right=15] (a4);
	\draw[->] (a6) to [bend right=15] (a5);
	\draw[->] (a6) to [bend right=-10] (a4);
	\draw[->] (a4) to [bend right=25] (a6);
	\draw[->] (a3) to [bend right=25] (a6);
	\draw[->] (a6) to [bend right=-10] (a3);
	
	\draw[->] (a1) to [out=-0+90+45,in=-0+90-45,looseness=8] (a1);
	\draw[->] (a2) to [out=-60+90+45, in=-60+90-45,looseness=8] (a2);
	\draw[->] (a4) to [out=-180+90+45,in=-180+90-45,looseness=8] (a4);
	\draw[->] (a4) to [out=-180+90+45,in=-180+90-45,looseness=12] (a4);
	\draw[->] (a5) to [out=-240+90+45,in=-240+90-45,looseness=8] (a5);
	\draw[->] (a5) to [out=-240+90+45,in=-240+90-45,looseness=12] (a5);
	\draw[->] (a6) to [out=-300+90+45,in=-300+90-45,looseness=8] (a6);
	\draw[->] (a6) to [out=-300+90+45,in=-300+90-45,looseness=12] (a6);
	\end{scope}
	
\end{tikzpicture}
\end{center}}
\caption{Curve quivers for $\frac{1}{6}(1,2,3)$}
\label{fig:1/6c}
\end{figure}
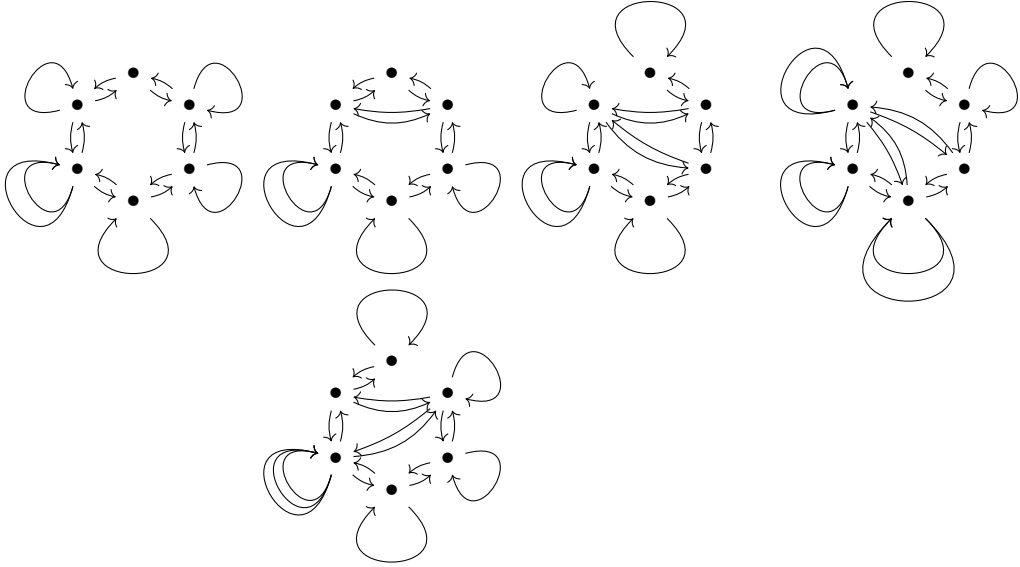

\end{example}

Using the two facts described above we iterate over all quivers and attempt to find relationships between them:

\begin{verbatim}
    mutationSequences = []
    while mutationSequences has empty terms:
        for quiver1 in quivers:
            for quiver2 in quivers:
                possible = possibleMutationNodes(quiver1, quiver2)
                if possible.length == 1
                    and canConstructPotential(quiver1, quiver2, possible[0]):
                    mutationSequences[quiver1] = mutationSequences[quiver2] + possible[0]
\end{verbatim}

Once this routine is finished we have a plausible relationship between the quivers. At this point we cannot be certain that the relationship between the quivers is correct but we will verify it later.


\vspace{1em}

Once we have found a sequence of mutations corresponding to each quiver we sort the quivers by the length of their associated mutation sequences starting with the initial quiver. For each quiver we then perform the following steps:

\begin{enumerate}
    \item Set the quiver's potential to all of the potential terms we have found so far, and mutate at each node in the quiver's mutation sequence up to the $(n-1)$th mutation. Because all of the quivers are sorted by length, the potential terms for these mutations will have already been found, and the result will be some existing quiver in the set.
    \item Then we perform the $n$th mutation, which we have not previously explored, and compare the resulting quiver to the quiver we're currently analyzing. If some potential terms were missing i the $(n-1)$th quiver, this quiver will have some extra edges.
    \item We add potential terms to the $(n-1)$th quiver that eliminate these extra edges.
    \item Then, we take the $(n-1)$th quiver and mutate it back to the base quiver. The resulting potential of that quiver is our new potential, which we use on the next quiver.
\end{enumerate}


Once we have completed this process for every quiver, the potential contains the correct terms necessary to eliminate the proper edges on every mutation. To confirm that we constructed the correct potential, we take the potential and the first quiver in the set, perform the mutations again in the sequences we identified using the complete potential, and compare the resulting quiver set to the expected set.



\subsection{Success rate} \label{sec:list} On every valid $a, b, c$ subgroup from $(1, 1, 1)$ up to $(8, 8, 8)$, our program is able to find a potential for 72 out of 95 (75\%) of the tested quiver sets. For 23 of the quivers, we fail to build a valid potential. The apparent cause of this is that we construct a potential that has a quadratic term that cannot be reduced. An example of this follows.

\vspace{1em}

In order to verify that each potential we constructed is minimal, we test removing each term in the constructed potential one at at time. We then find all possible quivers using that potential, and verify that the smaller quiver set does not produce the correct result.


\vspace{1em}

The list of triples for which we find a valid potential is:

\vspace{0.5em}
{\small \noindent
(1,1,1), (1,1,2), (1,1,3), (1,1,4), (1,1,5), (1,1,6), (1,1,7), (1,1,8), (1,2,2), (1,2,3), (1,2,4), (1,2,5), (1,2,6), (1,2,7), (1,2,8), (1,3,3), (1,3,4), (1,3,5), (1,3,6), (1,3,7), (1,4,4), (1,4,5), (1,4,6), (1,4,7), (1,4,8), (1,5,5), (1,5,6), (1,5,7), (1,6,6), (1,6,7), (1,7,7), (1,7,8), (1,8,8), (2,2,3), (2,2,5), (2,2,7), (2,3,3), (2,3,4), (2,3,5), (2,3,6), (2,3,8), (2,4,5), (2,4,7), (2,5,5), (2,5,6), (2,5,7), (2,7,7), (3,3,4), (3,3,5), (3,3,7), (3,3,8), (3,4,4), (3,4,6), (3,4,7), (3,5,5), (3,5,8), (3,7,7), (3,8,8), (4,4,5), (4,4,7), (4,5,5), (4,7,7), (5,5,6), (5,5,7), (5,5,8), (5,6,6), (5,7,7), (5,8,8), (6,6,7), (6,7,7), (7,7,8), (7,8,8).
}
\vspace{0.5em}

We also tested several other families depending on a parameter $a\in\N$. For each family we tested values of $a$ starting at 1 and stopping at the largest number for which the computation terminated in a reasonable time.


\begin{itemize}
    \item $\frac{1}{r}(1,2,a)$, $a \leq 30$: 9 successes (succeeds on $a = 1, 2, 3, 4, 5, 6, 7, 8, 10$).
    \item $\frac{1}{r}(1,a,a^2)$, $a \leq 5$: 3 successes ($a = 1, 2, 3$).
    \item $\frac{1}{r}(1,a,2a)$, $a \leq 13$: 4 successes ($a = 1, 2, 3, 4$).
\end{itemize}

These add two more positive examples not found in the list above: $\frac{1}{13}(1,2,10)$ and $\frac{1}{13}(1,3,9)$. As has been the case throughout our data-gathering, the potential search failing does not mean that there is no suitable potential but simply that our search method did not locate one.


\begin{example}
We generate the correct potential for $G = \frac{1}{11}(1,2,8)$. We  begin with the quiver in Fig.~\ref{fig:1/128_0} that we take as our starting quiver and the four other quivers in Fig.~\ref{fig:1/128_1_to_4} associated to the other resolutions of the $\frac{1}{11}(1,2,8)$ singularity that together make up the expected quiver set.

\begin{figure}[h]
    \centering
    \includegraphics[width=3in]{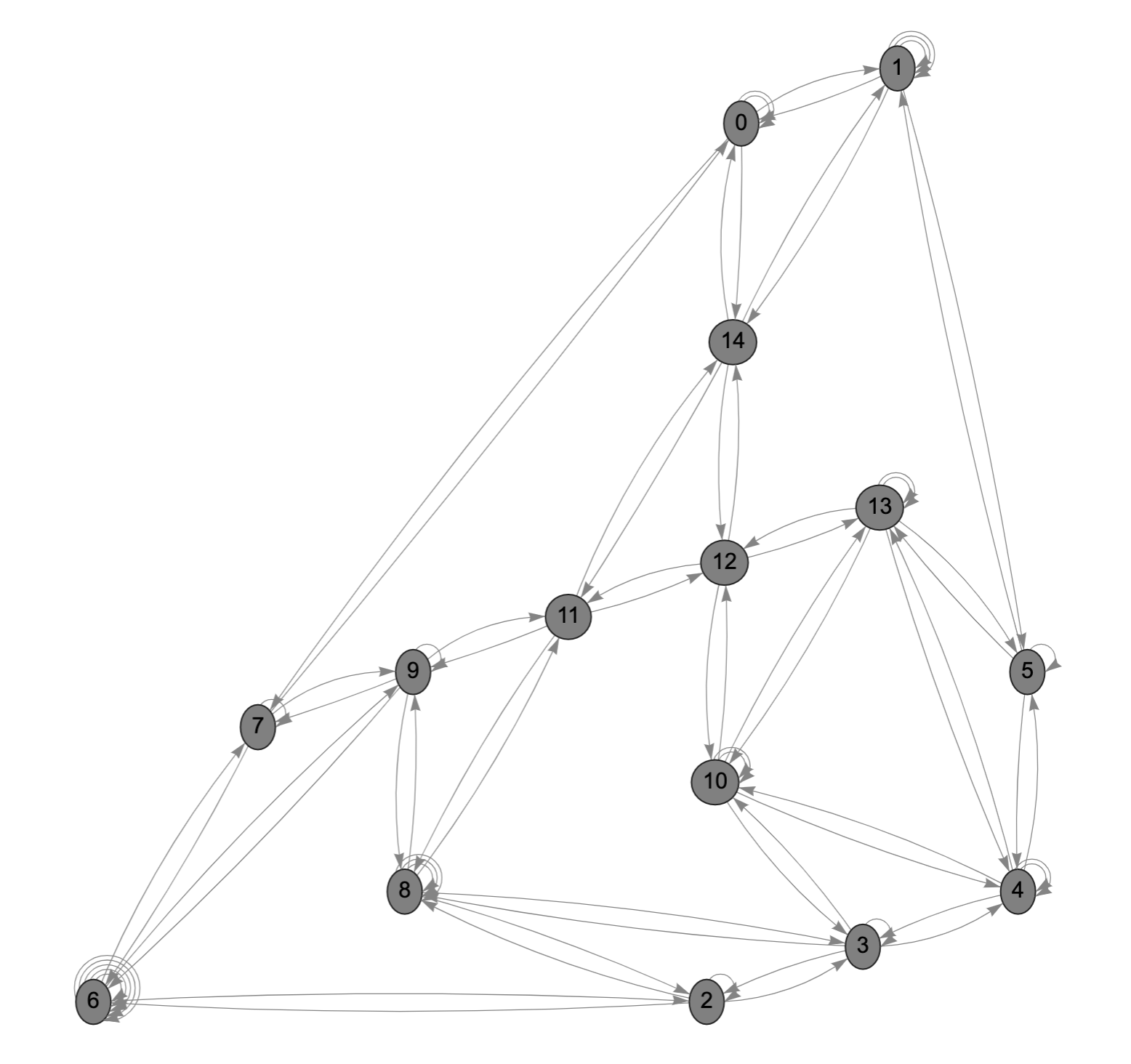}
    \caption{Curve quiver for $\frac{1}{11}(1,2,8)$-Hilb}
    \label{fig:1/128_0}
    \includegraphics[width=6.5in]{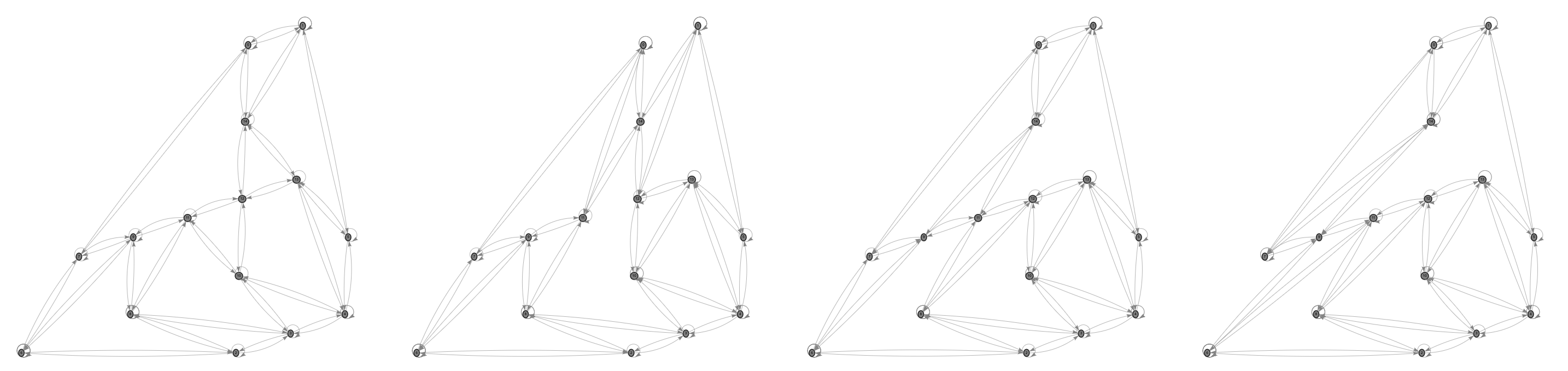}
    \caption{Other curve quivers for $\frac{1}{11}(1,2,8)$}
    \label{fig:1/128_1_to_4}
\end{figure}

For this data we find the following potential:
{\small
\begin{align*} &-x_{13,12}x_{12,11}x_{11,12}x_{12,13} + x_{12,10}x_{10,12}x_{12,14}x_{14,12} + x_{13,10}x_{10,12}x_{12,13} + x_{13,12}x_{12,10}x_{10,13} \\ 
&+ x_{14,0}x_{0,14}x_{14,12}x_{12,14} + x_{13,12}x_{12,13}x_{13,13} + x_{12,10}x_{10,10}x_{10,12} -x_{14,1}x_{1,14}x_{14,11}x_{11,14} + x_{14,0}x_{0,1}x_{1,14} \\ &+ x_{14,1}x_{1,0}x_{0,14} + x_{11,8}x_{8,11}x_{11,14}x_{14,11} + x_{11,9}x_{9,11}x_{11,12}x_{12,11} + x_{9,6}x_{6,9}x_{9,11}x_{11,14}x_{14,11}x_{11,9} \\ &+x_{9,7}x_{7,9}x_{9,9} + x_{9,6}x_{6,7}x_{7,9} + x_{9,7}x_{7,6}x_{6,9} + x_{11,8}x_{8,9}x_{9,11} + x_{11,9}x_{9,9}x_{9,11} + x_{11,9}x_{9,8}x_{8,11} \\ &+ x_{14,11}x_{11,12}x_{12,14} + x_{12,11}x_{11,14}x_{14,12}
\end{align*}
}
\end{example}

\begin{example}[A non-reducible potential]

In some cases our generation procedure results in a non-reducible potential. For $G=\frac{1}{12}(1, 3, 8)$ we perform several iterations of the potential-finding procedure.

\begin{figure}[h]
    \centering
    \includegraphics[width=4.5in]{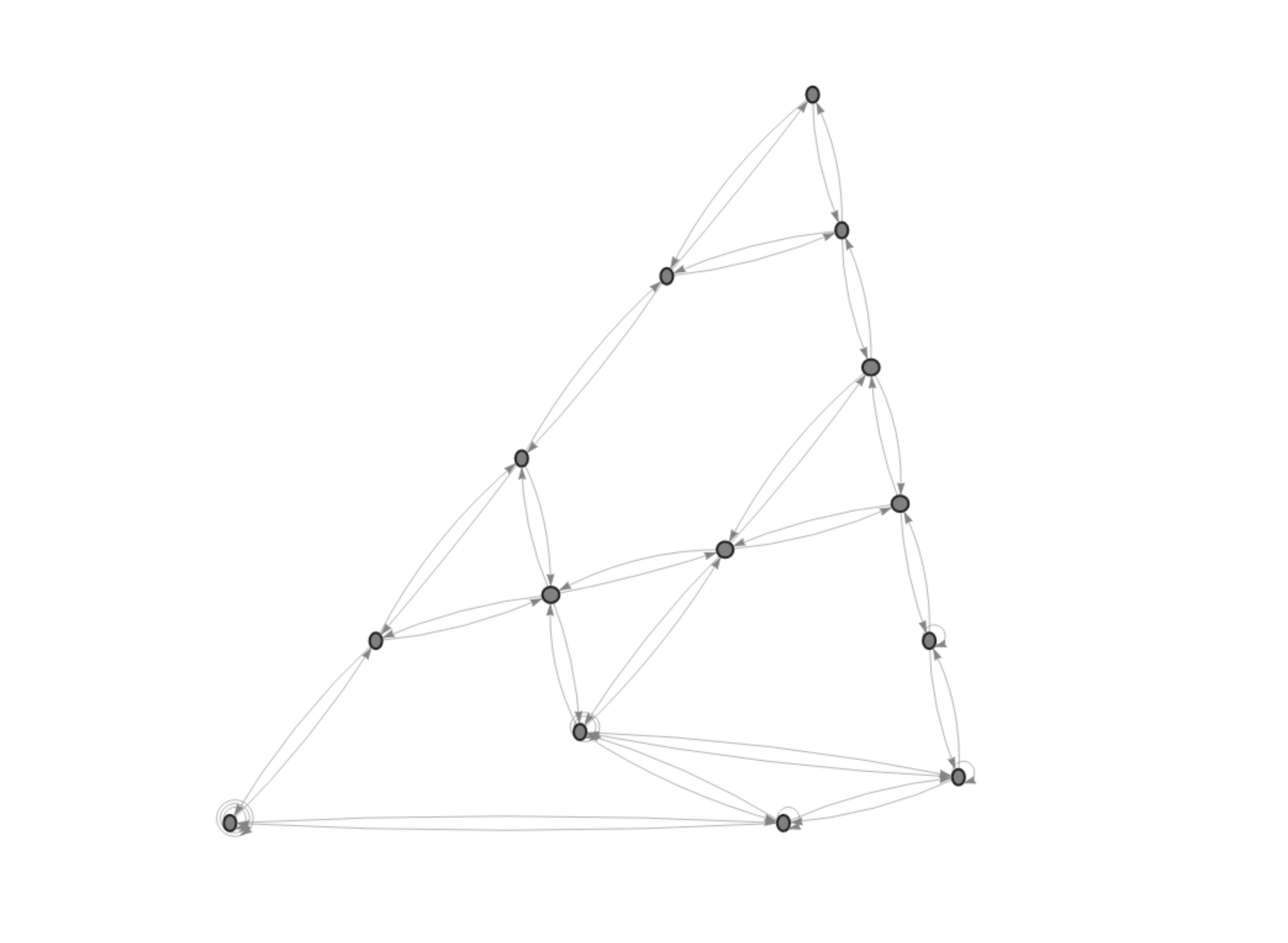}
    \caption{Curve quiver for $\frac{1}{12}(1,3,8)$-Hilb}
    \label{fig:1/12}
\end{figure}

The potential we produce after a few iterations has the following terms involving node 18 in the numbering from the \texttt{qwp\_mutations} application:
$$x_{11,18}x_{18,18}x_{18,11}\qquad x_{18,11}x_{11,11}x_{11,18}x_{18,15}x_{15,18}$$

Node 16 is adjacent to nodes 15 and 18. The mutation sequence then calls for a mutation at 16, which causes the two terms to be transformed into:
$$x_{18,11}x_{11,18}x_{18,16}x_{16,18}\qquad x_{18,11}x_{11,11}x_{11,18}x_{18,16}x_{16,15}x_{15,16}x_{16,18}$$

The next mutation in the sequence at node 18, and performing this mutation transforms the first term into a quadratic term. However, because this term is a subset of the second term it cannot be eliminated and so the reduction of the potential fails.
\end{example}

\subsection{Final musings} We conclude with some final thoughts and questions.

\subsubsection{Expectation from noncommutative geometry} Our understanding of the homological minimal model program \cite{wem_asp_14,wem_flo_18,dw_non_16} is that there is an expectation that the curve quiver of a crepant resolution, possibly equipped with a potential, should correspond to some natural object in noncommutative geometry (perhaps a kind of noncommutative deformation space for the given resolution). Mutations in this context should, as usual, describe the impact that flopping a curve has on this noncommutative object. In this sense it is reasonable to hunt for a suitable potential for curve quivers associated to the easiest resolution to construct -- the $G$-Hilbert scheme -- in this setting. 

\begin{question}
    Is there a natural interpretation of a (or the) quiver with (minimal) potential whose mutations classify flops of $G\hilb\C^3$ in terms of noncommutative geometry?
\end{question}

There are many other questions of interest one could pose: for instance, one could focus in on other special resolutions with notable moduli interpretations, such as iterated Hilbert schemes (or `Hilb of Hilb') \cite{iin_gnh_13,wor_wal_22}.

\begin{question}
    If $Y\to\C^3/G$ is a resolution with some additional equivariant structure (e.g.~an iterated Hilbert scheme), how does this manifest in a (or the) quiver with (minimal) potential whose mutations classify flops of $Y$?
\end{question}

One can regard the present work as a computational approach to some of these problems that, empowered by Prop.~\ref{prop:minimal}, bypasses certain aspects of the noncommutative geometry.

\subsubsection{Combinatorial questions}

We discussed the issues involved with verification, mostly stemming from the lack of an efficient graph isomorphism test. Our questions center on this issue in the specific context of curve quivers (from triangulations).

\begin{question} \label{conj:qs_eg} Suppose $Q$ is the curve quiver of a triangulation corresponding to a toric resolution $Y\to X$. Let $S$ be the set of curve quivers of resolutions related to $Y$ by flops, and let $\Gamma$ be the corresponding graph of resolutions linked by flops.
\begin{enumerate}
    \item Do all (or most) potentials on $Q$ with quiver set $S$ have exchange graph $\Gamma$?
    \item Do all (or most) quivers obtained by $Q$ from mutation with expected inward and outward vertex valencies agree? In other words, is valency testing sufficient to test for quiver isomorphism in this setting?
\end{enumerate}
\end{question}

We have yet to find a counterexample to either of these questions in our computations detailed above.

\subsubsection{Dimer models} As pointed out to us by Tom Ducat, little of the preceding needs to be tied to quotient singularities but could likely be extended to all Gorenstein toric singularities via the theory of dimer models \cite{iu_dim_16,cht_com_20}. It may even be cleaner to access Conjecture \ref{conj:main} in this wider context.

\begin{question}
    Is the extension of Conjecture \ref{conj:main} to all Gorenstein toric singularities true?
\end{question}

\bibliographystyle{acm}
\bibliography{bw}

\end{document}